\newtheorem{thm}{Theorem}[section]
\newtheorem{pro}[thm]{Proposition}
\newtheorem{cor}[thm]{Corollary}
\theoremstyle{definition}
\newtheorem{den}[thm]{Definition}
\newtheorem{example}[thm]{Example}
\theoremstyle{remark}
\numberwithin{equation}{section}
\title{Connes-biprojective  dual Banach algebras}
\author{A. Shirinkalam$^1$}
\address{$^1$  Faculty of Mathematics and Computer
	Science, Amirkabir University of Technology, 424 Hafez Avenue,
	Tehran 15914, Iran, e-mail: {\tt shirinkalam\_a@aut.ac.ir}}
\author{A. Pourabbas$^2$}
\address{$^2$ Faculty of Mathematics and Computer
	Science, Amirkabir University of Technology, 424 Hafez Avenue,
	Tehran 15914, Iran, e-mail: {\tt arpabbas@aut.ac.ir}}
\begin{document}
\pagestyle{headings}

\begin{abstract}
In this paper, we introduce a new notion of  biprojectivity, called Connes-biprojective,
  for   dual Banach algebras.  We study the relation between this new notion to Connes-amenability and we  show that, 
for a given dual Banach algebra  $ \mathcal{A}  $, it is  Connes-amenable if and only if $ \mathcal{A}  $ is Connes-biprojective and has a bounded approximate identity.

Also,  for an Arens regular Banach algebra $ \mathcal{A}  $, we show that if $ \mathcal{A}  $ is biprojective, then the dual Banach algebra $ \mathcal{A} ^{**} $ is Connes-biprojective.
\end{abstract}
\maketitle
Keywords: Connes-biprojective, dual Banach algebra, $ \sigma WC $-virtual diagonal, Connes-amenable.

MSC 2010: Primary: 46M10; Secondary: 46M18, 46H20.
\section{Introduction and Preliminaries}

Let $ \mathcal{A}  $ be a Banach algebra and let $ \mathcal{A}\hat{\otimes} \mathcal{A}$ be the projective tensor product of $ \mathcal{A}  $ with
itself, which is a Banach $ \mathcal{A}  $-bimodule for the usual left and right operations with respect to $ \mathcal{A}  $. Let $ \pi :\mathcal{A}\hat{\otimes} \mathcal{A} \rightarrow  \mathcal{A}$ denote the  algebra multiplication in
$  \mathcal{A}$, defined by $ \pi (a\otimes b)=ab \quad (a, b \in \mathcal{A} ) $ and extended by linearity and continuity. Let $ \pi^* :  \mathcal{A}^* \rightarrow (\mathcal{A}\hat{\otimes} \mathcal{A})^*  $ be the adjoint map of $  \pi$,
where $ \mathcal{A}^* $ is the topological dual of $ \mathcal{A}  $.

In the \textit{Helemskii's Banach homology}  setting, there are two important notions related
to that one of \textit{(Johnson's) amenability}  for Banach algebras. These are \textit{projectivity}  and
\textit{flatness}. A Banach algebra $ \mathcal{A} $ is called biprojective if $  \pi$ is a retraction, which is to say, there
is a bounded $ \mathcal{A}  $-bimodule homomorphism  $ \rho :\mathcal{A} \rightarrow  \mathcal{A}\hat{\otimes} \mathcal{A} $ such that $\pi \circ \rho = id_{\mathcal{A}}  $. Similarly, A Banach algebra $ \mathcal{A} $ is called biflat if $ \pi^* $ is a coretraction; that is, if there is a bounded
$ \mathcal{A} $-bimodule homomorphism $ \gamma : (\mathcal{A}\hat{\otimes} \mathcal{A})^* \rightarrow \mathcal{A}^*$
such that 
$\gamma \circ \pi^* = id_{\mathcal{A}^*}  $ \cite [section VII]{H}. Then we have that
a Banach algebra $ \mathcal{A} $ is amenable if and only if $ \mathcal{A} $ is biflat and has a bounded approximate identity; see \cite [Proposition 4.3.23 and Exercise 4.3.15]{R} or \cite [Theorem VII.2.20]{H}. Clearly,
every biprojective Banach algebra is biflat -the converse is generally not true- and, as a
consequence, every biprojective Banach algebra with a bounded approximate identity is
amenable.

Another important characterization of amenability involving the map $ \pi :\mathcal{A}\hat{\otimes} \mathcal{A} \rightarrow  \mathcal{A}$ is
that $  \mathcal{A}$ is amenable if and only if there is a virtual diagonal for $  \mathcal{A}$. 

The class of dual Banach algebras was introduced by V. Runde (2001).
Examples of these algebras include von Neumann
algebras, the measure algebras $M(G)$ of a locally compact groups G, the algebra of bounded
operators $\mathcal{B}(H)$, for a Hilbert space or  a reflexive Banach space $H$, the bidual Banach algebra $  \mathcal{A}^{**} $ for  an Arens regular
algebras $   \mathcal{A}$. 

The original Johnson's amenability is too strong for dual
Banach algebras. Indeed, S. Wasserman proved (1976) that every amenable von
Neumann algebra is subhomogeneous, and H. G. Dales, F. Ghahramani and A. Helemskii
proved (2002) that $M(G)$ is amenable if and only if $G$ is amenable -as a group- and \textit{discrete}.

A suitable amenability-type concept to deal with dual Banach algebras is the \textit{Connes-amenability}, first introduced by Johnson, Kadison and
Ringrose for von Neumann algebras in 1972. A von Neumann algebra is Connes-amenable
if and only if it is injective. Also, $M(G)$ is Connes-amenable if and only if $G$ is
amenable 
(see  \cite {BP}, \cite {Co1} and \cite {Co2}).

It is possible to characterize Connes-amenability for dual Banach algebras in terms
of virtual diagonals. Let $   \mathcal{A}$ be a dual Banach algebra. For a given $   \mathcal{A}$-bimodule $E$,
let $\sigma WC(E)  $ denote the closed submodule of $E$ of all elements $x$ such that the mappings
$ \mathcal{A}\rightarrow E;\, a\mapsto a\cdot x $ and $ a\mapsto x\cdot a $ are $ \sigma(\mathcal{A},\mathcal{A}_*)-\sigma (E,E^*) $-continuous. Then $\mathcal{A}_* \subseteq  \sigma WC((\mathcal{A}\hat{\otimes} \mathcal{A})^*) $, from which it follows that $ \pi^* $ maps $ \mathcal{A}_* $ into $ \sigma WC((\mathcal{A}\hat{\otimes} \mathcal{A})^*)  $. Hence, $ \pi^{**} $ drops to an 
$ \mathcal{A} $-bimodule homomorphism $ \pi_{\sigma WC} : \sigma WC((\mathcal{A}\hat{\otimes} \mathcal{A})^*)^* \rightarrow \mathcal{A}$.
Any element $ M  $ in $ \sigma WC((\mathcal{A}\hat{\otimes} \mathcal{A})^*)^* $ satisfying 
$$ a \cdot M = M \cdot a  \quad\text{and} \quad a \cdot \pi_{\sigma WC} M=a \quad (a \in \mathcal{A}),$$
is called a $ \sigma WC $-virtual diagonal for $  \mathcal{A}$. Then, a dual Banach algebra $  \mathcal{A}$ is Connes-amenable
if and only if there exists a $ \sigma WC $-virtual diagonal for $  \mathcal{A}$  (Runde, 2004).

 It is  natural to look for
a suitable (Helemskii's) homological analogue of biprojectivity, in the setting
of dual Banach algebras, that fit well with Connes-amenability.

 In this paper, we introduce an analogue of biprojectivity, called Connes-biprojective, and we study its homological properties and we show that how this concept deals with Connes-amenability.

The organisation of this paper is as follows.
We introduce a suitable notion of Connes-biprojectivity in the setting of dual
Banach algebras. We show that a dual Banach algebra is Connes-amenable if and only if
it is Connes-biprojective and has a bounded approximate identity (identity, indeed).
We  prove that if an Arens regular Banach algebra $  \mathcal{A}$ is biprojective, then the
bidual Banach algebra $  \mathcal{A}^{**}$ is Connes-biprojective.

Given a Banach algebra $ \mathcal{A} $ and a Banach $ \mathcal{A} $-bimodule $ E $, the topological dual space $ E^* $ of $  E$
is a Banach $ \mathcal{A} $-bimodule with actions
$$ \langle x, a\cdot \varphi \rangle :=\langle x\cdot a, \varphi\rangle, \quad \langle x, \varphi\cdot a \rangle :=\langle a\cdot x, \varphi\rangle \quad  (a\in \mathcal{A}, x\in E, \varphi \in E^*).$$

\begin{den}\cite [Definition 1.1]{Ru1} 
	A Banach algebra $ \mathcal{A} $ is called dual, if it is a dual Banach space with predual $ \mathcal{A}_* $ such that the  multiplication in $ \mathcal{A} $ is separately $ \sigma (\mathcal{A}, \mathcal{A}_*) $-continuous. Equivalently, a Banach algebra $ \mathcal{A} $ is dual if it has a predual  $ \mathcal{A}_* $ which is a closed submodule of $ \mathcal{A}^* $ (\cite [Exercise 4.4.1]{R}).
\end{den}
Although a predual may not be unique, we can recognize it from the context. In particular, we may speak of the weak* topology on $  \mathcal{A} $ without ambiguity.

\begin{den}\cite [Definition 1.4]{R5}
	Let $ \mathcal{A} $ be a dual Banach algebra and let $ E $ be a dual  Banach $ \mathcal{A} $-bimodule. An element $ x\in E $ is called normal, if the maps $ a\mapsto a\cdot x $ and $ a\mapsto x\cdot a $ are weak* continuous.
\end{den}
The set of all normal elements in $ E $ is denoted by $ E_\sigma $. We say that $ E $ is normal if  $E=E_\sigma $. It is easy to see that $ E_\sigma $ is a norm-closed submodule of $ E $. However, there is no need  for $ E_\sigma $ to be weak*-closed.

For a given dual Banach algebra $ \mathcal{A} $  and a Banach $ \mathcal{A} $-bimodule $ E $,
it is easy to see that $ \sigma WC(E) $ is a closed $ \mathcal{A} $-submodule of $ E $ and so $ E $ is canonically mapped into $ \sigma WC (E^*)^* $. If $ F $ is another Banach $ \mathcal{A} $-bimodule and if $ \psi : E\rightarrow F $ is a bounded $ \mathcal{A} $-bimodule homomorphism, then $ \psi (\sigma WC(E))\subseteq \sigma WC(F) $ holds. Runde in \cite [Proposition  4.4]{R5} showed that $ E= \sigma WC(E)$ if and only if $ E^* $ is normal Banach $ \mathcal{A} $-bimodule, and therefore for any Banach $ \mathcal{A} $-bimodule $ E $, the dual module $ \sigma WC(E^*) $ is normal. 

Recall that for a Banach algebra $ \mathcal{A} $ and a Banach $ \mathcal{A}$-bimodule $ E $, a bounded linear map $  D:\mathcal{A}\rightarrow E $ is called a derivation if $ D(ab)=a\cdot D(b)+D(a)\cdot b $ for every $ a,b \in \mathcal{A} $. A derivation $  D:\mathcal{A}\rightarrow E $ is called inner if there exists  an element $ x \in E $ such that for every $ a\in \mathcal{A} $, $D(a)= a \cdot x - x \cdot a $.
\begin{den}\cite [Definition 1.5]{R5}
	A dual Banach algebra is called Connes-amenable if for every normal dual $ \mathcal{A} $-bimodule $ E $, every weak* continuous derivation $ D:\mathcal{A}\rightarrow E $ is inner.
\end{den}

\section{Connes-biprojective dual Banach algebras}
As mentioned in the introduction, the concept of biprojectivity is important  in the Helemskii's Banach homology and is closely related to Johnson's amenability. In this section we define a suitable analogue of Helemskii's homological-type concept to deal with dual Banach algebras, called Connes-biprojectivity and we show that this concept is closely related to Connes-amenability.

\begin{den}
	Let $  \mathcal{A} $ be a dual Banach algebra. Then $  \mathcal{A} $ is called \textit{Connes-biprojective}
	if there exists a bounded $  \mathcal{A} $-bimodule homomorphism $ \rho : \mathcal{A}\rightarrow \sigma WC(( \mathcal{A}\hat{\otimes}  \mathcal{A})^*)^* $ such that $  \pi_{\sigma WC}\circ \rho =id_{ \mathcal{A}} $ (that is, $ \pi_{\sigma WC} $ is a retraction).
\end{den}

In the following theorem, we determine the relation between Connes-biprojectivity and  Connes-amenability. 
\begin{thm}\label{3.5}
	The following are equivalent for a dual Banach algebra $  \mathcal{A} $:
	\begin{enumerate}
		\item[(i)] $  \mathcal{A} $ is Connes-biprojective and has a bounded approximate identity,
		\item[(ii)]$  \mathcal{A} $  is Connes-amenable.
	\end{enumerate}
\end{thm}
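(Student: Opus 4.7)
My plan is to use Runde's characterization recalled in the introduction: a dual Banach algebra is Connes-amenable if and only if it admits a $\sigma WC$-virtual diagonal. The task then becomes converting between a $\sigma WC$-virtual diagonal $M$ and a bounded $\mathcal{A}$-bimodule splitting $\rho$ of $\pi_{\sigma WC}$, with a bounded approximate identity (in fact, a true identity) serving as the bridge.

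For (ii) $\Rightarrow$ (i), I would start from a $\sigma WC$-virtual diagonal $M$. Applying the $\mathcal{A}$-bimodule map $\pi_{\sigma WC}$ to the identity $a\cdot M=M\cdot a$ yields $a\cdot\pi_{\sigma WC}(M)=\pi_{\sigma WC}(M)\cdot a$; combined with $a\cdot\pi_{\sigma WC}(M)=a$ this shows that $e:=\pi_{\sigma WC}(M)$ is a two-sided identity of $\mathcal{A}$, so in particular $\mathcal{A}$ has a bounded approximate identity. I would then define $\rho:\mathcal{A}\to\sigma WC((\mathcal{A}\hat\otimes\mathcal{A})^*)^*$ by $\rho(a):=a\cdot M$. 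That $\rho$ is bounded and a left module map is immediate; using $M\cdot b=b\cdot M$ one checks $\rho(a)\cdot b=(a\cdot M)\cdot b=a\cdot(b\cdot M)=\rho(ab)$, and finally $\pi_{\sigma WC}(\rho(a))=a\cdot\pi_{\sigma WC}(M)=a\cdot e=a$.

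For (i) $\Rightarrow$ (ii), let $(e_\alpha)$ be the given bounded approximate identity. The net $(\rho(e_\alpha))$ is norm-bounded in the dual space $\sigma WC((\mathcal{A}\hat\otimes\mathcal{A})^*)^*$, so by Banach--Alaoglu it has a weak*-cluster point $M$; after passing to a subnet I may assume $\rho(e_\alpha)\stackrel{w^*}{\to}M$. I claim $M$ is a $\sigma WC$-virtual diagonal. For the commutation relation I would use that $\rho$ is a bimodule map, which gives $a\cdot\rho(e_\alpha)-\rho(e_\alpha)\cdot a=\rho(ae_\alpha-e_\alpha a)\to 0$ in norm (since $(e_\alpha)$ is a two-sided bounded approximate identity); because $\sigma WC((\mathcal{A}\hat\otimes\mathcal{A})^*)^*$ is a normal dual $\mathcal{A}$-bimodule, the maps $N\mapsto a\cdot N$ and $N\mapsto N\cdot a$ are weak*-continuous, so passing to the weak*-limit gives $a\cdot M=M\cdot a$. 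For the diagonal equation I would invoke the weak*-weak* continuity of $\pi_{\sigma WC}$ (it is the adjoint of the bounded map $\mathcal{A}_*\to\sigma WC((\mathcal{A}\hat\otimes\mathcal{A})^*)$ induced by $\pi^*$): taking weak*-limits in $\pi_{\sigma WC}(\rho(e_\alpha))=e_\alpha$ produces $\pi_{\sigma WC}(M)=\mathrm{w}^*\!\!-\!\!\lim e_\alpha=:e\in\mathcal{A}$, and then $a\cdot e=\mathrm{w}^*\!\!-\!\!\lim ae_\alpha=a$, since $ae_\alpha\to a$ in norm.

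The two technical ingredients that really do the work are the normality of $\sigma WC((\mathcal{A}\hat\otimes\mathcal{A})^*)^*$ as a dual bimodule (ensuring weak*-continuity of the module actions) and the weak*-weak* continuity of $\pi_{\sigma WC}$; both are already recorded in the preliminaries, so the remainder is essentially bookkeeping. The only subtle point, and the part where I would be most careful, is that the direction (i) $\Rightarrow$ (ii) must make do with a bounded approximate identity rather than a true unit: this is why the norm estimate $\|ae_\alpha-e_\alpha a\|\to 0$ is invoked before passing to the weak*-limit, instead of simply quoting $ae=ea=a$.
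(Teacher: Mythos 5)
Your proof is correct and takes essentially the same route as the paper's: for (ii)$\Rightarrow$(i) you set $\rho(a)=a\cdot M$ for a $\sigma WC$-virtual diagonal $M$, and for (i)$\Rightarrow$(ii) you take a weak*-cluster point of $\rho(e_\alpha)$; you merely fill in the details the paper dismisses as ``easy to see.'' One pedantic remark: the weak*-continuity of $N\mapsto a\cdot N$ and $N\mapsto N\cdot a$ on $\sigma WC((\mathcal{A}\hat\otimes\mathcal{A})^*)^*$ holds simply because these maps are adjoints of bounded maps on the predual submodule (normality, strictly speaking, is about weak*-continuity in the algebra variable $a$), but the fact you need is true and your argument goes through.
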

\begin{proof}
	Suppose that $  \mathcal{A} $ is Connes-amenable. Then there exists a $ \sigma WC $-virtual diagonal $ M\in  \sigma WC( (\mathcal{A}\hat{\otimes}  \mathcal{A})^*)^*$
	for $  \mathcal{A} $.
	We define $ \rho : \mathcal{A}\rightarrow (\sigma WC( \mathcal{A}\hat{\otimes}  \mathcal{A})^*)^* $ by $  \rho (a) = a\cdot M  $, for every $ a\in  \mathcal{A}. $ Then $$\| \rho (a) \| = \| a\cdot M \| \leq K \| a \| \| M \|.  $$
	Thus $ \rho $ is bounded. Also
	$  a\cdot \rho (b) = a\cdot (b\cdot M) = (ab)\cdot M$. On the other hand, since $ M $ is a virtual diagonal, $ (ab)\cdot M= M \cdot (ab)=(a\cdot M)\cdot b=\rho(a) \cdot b  ,$ that is, $ \rho $ is an $  \mathcal{A} $-bimodule homomorphism. It is easy to see that $  \pi_{\sigma WC}\circ \rho =id_{ \mathcal{A}} $. Note that since $  \mathcal{A} $ is Connes-amenable, it has an identity, equivalently, a bounded approximate identity.
	
	Conversely, suppose that  $  \mathcal{A} $ is Connes-biprojective and has a bounded approximate identity $ (e_\alpha)_\alpha $. Then there exists a bounded $  \mathcal{A} $-bimodule homomorphism $ \rho : \mathcal{A}\rightarrow \sigma WC(( \mathcal{A}\hat{\otimes}  \mathcal{A})^*)^* $ such that $  \pi_{\sigma WC}\circ \rho =id_{ \mathcal{A}} $. Let $  M_\alpha=\rho ( e_\alpha) \in \sigma WC(( \mathcal{A}\hat{\otimes}  \mathcal{A})^*)^* $. Then for every $ a \in  \mathcal{A}$, we have $$ a\cdot M_\alpha - M_\alpha \cdot a = \rho (a \cdot e_\alpha )- \rho (e_\alpha \cdot a) \rightarrow 0, $$ and $$ \Vert \pi_{\sigma WC} (M_ \alpha) (a)-a \Vert = \Vert  e_\alpha a-a \Vert \rightarrow 0\quad (\alpha \rightarrow \infty). $$
	Since  $ ( M_\alpha)_\alpha $ is uniformly bounded net in $ \sigma WC(( \mathcal{A}\hat{\otimes}  \mathcal{A})^*)^* $, it has a weak* limit point, say $ M $. It is easy to see that $ M $   is a $ \sigma WC $-virtual diagonal for $  \mathcal{A} $, so that $  \mathcal{A} $  is Connes-amenable.
\end{proof}

\begin{example}\label{3.3}
	
	Let $ \mathcal{A} $ be a biflat dual Banach algebra. Then  $ \pi ^* :\mathcal{A}^* \rightarrow(\mathcal{A} \hat{\otimes} \mathcal{A})^* $ is a co-retraction and so $   \pi ^*\vert _{\mathcal{A}_*} :\mathcal{A}_* \rightarrow \sigma WC((\mathcal{A}\hat{\otimes} \mathcal{A})^*)$ is a co-retraction again. This means that $  \pi_{\sigma WC} : (\sigma WC(\mathcal{A}\hat{\otimes} \mathcal{A})^*)^* \rightarrow \mathcal{A} $  is a retraction, that is, $ \mathcal{A} $ is  Connes-biprojective.
\end{example}
By the previous example, every biprojective dual Banach algebra is Connes-biprojective.
In the following example we see that the converse  is false in general.
\begin{example}\label{ex3.4}
	Let $ G $ be a non-discrete amenable  locally compact group. Then by \cite [Theorem 4.4.13]{R},
	$ M(G) $, the  measure algebra of $ G $, is a Connes-amenable dual Banach algebra and thus by
	Theorem \ref{3.5}, $ M(G) $ is Connes-biprojective.  Since $ G $ is not discrete, by \cite [Theorem 1.3]{DGH} $ M(G) $ is not amenable, so it is not biflat.
\end{example}
Clearly, every Connes-amenable Banach algebra is Connes-biprojective.
Here we give  two examples of  Connes-biprojective dual Banach algebras, which are not Connes-amenable.
\begin{example}
	Let $S$ be a discrete semigroup and  let $\ell^{1}(S)$ be  its semigroup algebra. Let $ \mathcal{A}=\ell^{1}(S)^{*} $. If $\phi$ is a character on $c_{0}(S)$,  then there exists a unique extension of $\phi$ on $c_{0}(S)^{**}$, (which is denoted    by $\tilde{\phi}$) and defined by $$\tilde{\phi}(F)=F(\phi) \quad (F\in c_{0}(S)^{**}\cong  \mathcal{A}).$$
	$ \tilde{\phi} $ is a multiplicative map because, for every $ F,G \in \mathcal{A} $,
	$$ \tilde{\phi}(FG)=(FG)(\phi )=F(\phi) G(\phi).$$
	Now we define  a new multiplication on $ \mathcal{A}$ by
	$$ab=\tilde{\phi}(a)b\quad (a,b\in  \mathcal{A},\quad \phi\in c_{0}(S)^{*}).$$
	With this multiplication, $ \mathcal{A}$ becomes a Banach algebra which is a dual Banach space. We denote this algebra by $ \mathcal{A}_{\phi}$. We define a map
	$\rho: \mathcal{A}_{\phi}\rightarrow  \mathcal{A}_{\phi} \hat{\otimes} \mathcal{A}_{\phi}$  by $\rho(a)=x_{0}\otimes a$, where $a\in  \mathcal{A}_{\phi}$ and $x_{0}$ is    an element in $ \mathcal{A}_{\phi}$  such that $\tilde{\phi}(x_{0})=1.$
	It is easy to see that $\rho$ is a bounded $ \mathcal{A}_{\phi}$-bimodule homomorphism and $\pi_{ \mathcal{A}_{\phi}}\circ\rho(a)=a$ for each $a\in  \mathcal{A}_{\phi}.$
	Hence $ \mathcal{A}_{\phi}$ is a biprojective Banach algebra.  If we show that $ \mathcal{A}_{\phi}$ is a dual Banach algebra, then by  Example \ref{3.3}  $ \mathcal{A}_{\phi}$ is Connes-biprojective.
	It is enough to show that the multiplication in $ \mathcal{A}_{\phi}$ is separately $w^{*}$-continuous. Suppose  $b\in  \mathcal{A}$ is a fixed element. Let $ a \in  \mathcal{A} $ and $(a_{\alpha}) \subseteq  \mathcal{A}$   be such  that $a_{\alpha} \rightarrow a$ in the $w^*$-topology. For each $f\in \ell^{1}(S)\cong c_{0}(S)^{*}$, we have
	$$a_{\alpha}(f)\rightarrow a(f) \quad \text{as} \;\alpha \rightarrow \infty.$$
	Since the character space  of $ c_{0}(S)$ lies in  $\ell^{1}(S)$, so the last statement is also true for $f=\phi$, where $\phi$ is the corresponding functional to  $\tilde{\phi}$. Hence, $a_{\alpha}(\phi)\rightarrow a(\phi)\quad \text{as} \;\alpha \rightarrow \infty$, which implies that $\tilde{\phi}(a_{\alpha})\rightarrow \tilde{\phi}(a) \quad \text{as} \;\alpha \rightarrow \infty$. Then $\tilde{\phi}(a_{\alpha})b\rightarrow \tilde{\phi}(a)b$. Now suppose that $ (b_{\alpha}) $  is a net in $ \mathcal{A}_{\phi}$ such that $b_{\alpha}\rightarrow b \quad \text{as} \;\alpha \rightarrow \infty$ in the $w^*$-topology. It is easy to see that $\tilde{\phi}(a)b_{\alpha}\rightarrow\tilde{\phi}(a)b$ in the $w^*$-topology, for every   $a\in  \mathcal{A}_{\phi}$. Hence $ \mathcal{A}_{\phi}$ is a dual Banach algebra. \\
	Now if $ \mathcal{A}_{\phi}$ is  Connes-amenable, then $ \mathcal{A}_{\phi}$ has an  identity, so  $\dim  \mathcal{A}=1$, (because for each  $a\in  \mathcal{A}$  we have  $a=ae=\tilde{\phi}(a)e$, where $e$ is an  identity of $ \mathcal{A}_{\phi}$), hence a
	contradiction reveals.
\end{example}
\begin{example}\label{exa3.2}
	Consider $  \mathcal{A}= \begin{pmatrix}
	0 & \mathbb{C} \\
	0 & \mathbb{C}
	\end{pmatrix}$
	with usual matrix multiplication and $ L^1 $-norm. Since $  \mathcal{A} $ is finite dimensional, it is a dual Banach algebra. Clearly $  \mathcal{A} $ has a right identity but it does not have an identity,  so  it  is not Connes-amenable. We define a map $ \zeta : \mathcal{A} \rightarrow  \mathcal{A}\hat{\otimes}  \mathcal{A}$ by\\
	$ \begin{pmatrix}
	0 & x \\
	0 & y
	\end{pmatrix} \mapsto \begin{pmatrix}
	0 & x \\
	0 & y
	\end{pmatrix} \otimes \begin{pmatrix}
	0 & 1 \\
	0 & 1
	\end{pmatrix}$.
	We have
	$ \| \zeta \begin{pmatrix}
	0 & x \\
	0 & y
	\end{pmatrix} \| = \| \begin{pmatrix}
	0 & x \\
	0 & y
	\end{pmatrix} \otimes \begin{pmatrix}
	0 & 1 \\
	0 & 1
	\end{pmatrix} \| = \|  \begin{pmatrix}
	0 & x \\
	0 & y
	\end{pmatrix} \|  \|   \begin{pmatrix}
	0 & 1 \\
	0 & 1
	\end{pmatrix}  \| = 2 \|  \begin{pmatrix}
	0 & x \\
	0 & y
	\end{pmatrix}    \| $
	so $ \|\zeta \| \leq 2 $ and an easy calculation shows that $\zeta $ is an $  \mathcal{A} $-bimodule homomorphism. By composing the canonical map $  \mathcal{A}\hat{\otimes}  \mathcal{A}\rightarrow  \sigma WC(( \mathcal{A}\hat{\otimes}  \mathcal{A})^*)^* $ with $ \zeta $, we obtain a bounded $  \mathcal{A} $-bimodule homomorphism $ \rho : \mathcal{A} \rightarrow \sigma WC(( \mathcal{A}\hat{\otimes}  \mathcal{A})^*)^* $. Since for every $ a \in  \mathcal{A},  \pi_{\sigma WC} \circ \rho (a)= \pi_{\sigma WC} \circ \zeta (a) $ we have,\\
	$ (\pi_{\sigma WC} \circ \rho ) \begin{pmatrix}
	0 & x \\
	0 & y
	\end{pmatrix} = \pi_{\sigma WC}( \begin{pmatrix}
	0 & x \\
	0 & y
	\end{pmatrix} \otimes \begin{pmatrix}
	0 & 1 \\
	0 & 1
	\end{pmatrix} )= \begin{pmatrix}
	0 & x \\
	0 & y
	\end{pmatrix}.$
	This means that $ \rho $ is a right inverse of $  \pi_{\sigma WC} $. Hence $  \mathcal{A} $ is Connes-biprojective.
\end{example}

The next theorem shows that how Connes-biprojectivity deals with homomorphisms.

\begin{thm}\label{hard}
	Let $  \mathcal{A} $ be a    Banach algebra, and  let $ \mathcal{B} $ be a dual  Banach algebra.  Let $\theta :  \mathcal{A} \rightarrow \mathcal{B} $ be a continuous   homomorphism.
	\begin{enumerate}
		\item[(i)] Let $ \theta^* :\mathcal{B}^*\rightarrow \mathcal{A}^*  $ be such that $\theta^*\vert_{\mathcal{B}_*} :\mathcal{B}_*\rightarrow \mathcal{A}^*   $ is surjective.   Suppose that the image of the closed unit ball of $  \mathcal{A} $ is weak* dense in the closed unit ball of  $  \mathcal{B}  $. Then, biprojectivity of  $  \mathcal{A} $ implies Connes-biprojectivity of $ \mathcal{B} $.
		
		\item[(ii)]If $  \mathcal{A} $  is dual and  Connes-biprojective and  $\theta $ is weak* continuous, then $ \mathcal{B} $  is   Connes-biprojective.
	\end{enumerate}
\end{thm}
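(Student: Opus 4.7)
The plan is to transport the biprojective (resp.\ Connes-biprojective) splitting of $\mathcal{A}$ across $\theta$ and then extend the transported map from $\theta(\mathcal{A})$ to all of $\mathcal{B}$ by weak*-continuity. A crucial preliminary observation for (i) is that the two hypotheses combine to force $\theta^*|_{\mathcal{B}_*}:\mathcal{B}_*\to\mathcal{A}^*$ to be an isometric Banach-space isomorphism: since the image of $\mathrm{ball}\,\mathcal{A}$ lies in $\mathrm{ball}\,\mathcal{B}$, one has $\|\theta\|\le 1$, and for $\phi\in\mathcal{B}_*$, weak*-density of $\theta(\mathrm{ball}\,\mathcal{A})$ in $\mathrm{ball}\,\mathcal{B}$ together with the weak*-continuity of $\phi$ on $\mathcal{B}$ gives
\[
\|\phi\|_{\mathcal{B}_*}=\sup_{\|b\|\le 1}|\langle b,\phi\rangle|=\sup_{\|a\|\le 1}|\langle\theta(a),\phi\rangle|=\|\theta^*(\phi)\|_{\mathcal{A}^*}.
\]
Thus $\theta^*|_{\mathcal{B}_*}$ is isometric; coupled with the surjectivity hypothesis it becomes invertible with bounded inverse $\kappa:\mathcal{A}^*\to\mathcal{B}_*$.

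For (i) I would let $\rho:\mathcal{A}\to\mathcal{A}\hat{\otimes}\mathcal{A}$ be the biprojectivity splitting and form the bounded $\mathcal{A}$-bimodule homomorphism
\[
\Psi:=\iota\circ(\theta\otimes\theta)\circ\rho\colon\mathcal{A}\longrightarrow\sigma WC((\mathcal{B}\hat{\otimes}\mathcal{B})^*)^*,
\]
where $\iota$ is the canonical map $\mathcal{B}\hat{\otimes}\mathcal{B}\to\sigma WC((\mathcal{B}\hat{\otimes}\mathcal{B})^*)^*$. Taking the adjoint of $\Psi$ and restricting to the predual side yields $\Psi^*|\colon\sigma WC((\mathcal{B}\hat{\otimes}\mathcal{B})^*)\to\mathcal{A}^*$, and composing with $\kappa$ produces a bounded linear map $\eta:=\kappa\circ\Psi^*|\colon\sigma WC((\mathcal{B}\hat{\otimes}\mathcal{B})^*)\to\mathcal{B}_*$. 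The desired splitting is then $\tilde{\rho}:=\eta^*|_\mathcal{B}\colon\mathcal{B}\to\sigma WC((\mathcal{B}\hat{\otimes}\mathcal{B})^*)^*$, which by construction is weak*-weak* continuous and satisfies $\tilde{\rho}\circ\theta=\Psi$.

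It remains to verify the two defining properties of a Connes-biprojective splitting for $\tilde{\rho}$. The right-inverse identity follows from
\[
\pi_{\sigma WC}\circ\tilde{\rho}\circ\theta=\pi_\mathcal{B}\circ(\theta\otimes\theta)\circ\rho=\theta\circ\pi_\mathcal{A}\circ\rho=\theta
\]
on the weak*-dense subset $\theta(\mathcal{A})$, which then extends to all of $\mathcal{B}$ by weak*-weak* continuity of both $\pi_{\sigma WC}\circ\tilde{\rho}$ and the identity map (recall $\pi_{\sigma WC}$ is itself weak*-weak* continuous, being the adjoint of $\pi^*|_{\mathcal{B}_*}$). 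The $\mathcal{B}$-bimodule identity is derived analogously from the $\mathcal{A}$-bimodule property of $\Psi$, using that $\sigma WC((\mathcal{B}\hat{\otimes}\mathcal{B})^*)^*$ carries a separately weak*-weak* continuous module action, which is a consequence of the normality of $\sigma WC((\mathcal{B}\hat{\otimes}\mathcal{B})^*)$ recalled in the preliminaries.

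Part (ii) proceeds by the same template, this time starting from the Connes-biprojective splitting $\rho_\mathcal{A}\colon\mathcal{A}\to\sigma WC((\mathcal{A}\hat{\otimes}\mathcal{A})^*)^*$. Weak*-continuity of $\theta$ ensures that $(\theta\otimes\theta)^*\colon(\mathcal{B}\hat{\otimes}\mathcal{B})^*\to(\mathcal{A}\hat{\otimes}\mathcal{A})^*$ sends $\sigma WC_\mathcal{B}$ into $\sigma WC_\mathcal{A}$, so its adjoint yields a bounded map $\Theta\colon\sigma WC((\mathcal{A}\hat{\otimes}\mathcal{A})^*)^*\to\sigma WC((\mathcal{B}\hat{\otimes}\mathcal{B})^*)^*$; the composite $\Theta\circ\rho_\mathcal{A}$ is descended to $\mathcal{B}$ along $\theta$ via the same weak*-continuity mechanism used in (i). The principal obstacle throughout is the careful bookkeeping of how the several module actions interact with the various weak*-topologies at each stage and of the fact that intermediate maps land inside the appropriate $\sigma WC$ subspaces; it is the isometric identification $\theta^*|_{\mathcal{B}_*}\cong\mathcal{A}^*$ secured at the outset that makes the descent from $\mathcal{A}$ to $\mathcal{B}$ in (i) both canonical and well-defined.
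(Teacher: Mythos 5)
Your part (i) is correct and, while it follows the same overall strategy as the paper (transport the biprojectivity splitting through $\theta\hat{\otimes}\theta$ and the canonical map into $\sigma WC((\mathcal{B}\hat{\otimes}\mathcal{B})^*)^*$, then extend from the weak*-dense image of $\theta$), your implementation of the extension step is genuinely different and cleaner. The paper defines $\rho^{\mathcal{B}}(b)$ as a weak* accumulation point of $\zeta(a_\alpha)$ along a bounded net with $\theta(a_\alpha)\to b$, and must then separately verify well-definedness; that is exactly where the surjectivity of $\theta^*\vert_{\mathcal{B}_*}$ enters (it forces $a_\alpha\to 0$ weakly whenever $\theta(a_\alpha)\to 0$ weak*, whence $\zeta(a_\alpha)\to 0$). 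You instead observe that the two hypotheses make $\theta^*\vert_{\mathcal{B}_*}$ a Banach-space isomorphism onto $\mathcal{A}^*$ with bounded inverse $\kappa$, and build the extension in one stroke as $\tilde{\rho}=(\kappa\circ\Psi^*\vert)^*$; the identity $\tilde{\rho}\circ\theta=\Psi$ is then a one-line duality computation, the subnet and well-definedness issues disappear, and weak*-continuity of $\tilde{\rho}$ comes for free. Your verification of the right-inverse and bimodule identities by weak*-density (using normality of $\sigma WC((\mathcal{B}\hat{\otimes}\mathcal{B})^*)^*$ and weak*-continuity of $\pi_{\sigma WC}=(\pi^*\vert_{\mathcal{B}_*})^*$) is the same double-approximation the paper performs with iterated limits. (Only the bounded-below estimate $\|\theta^*(\phi)\|\geq\|\phi\|$ is actually needed, not the full isometry, but your argument gives both.)

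Part (ii), however, has a real gap as written. The ``same weak*-continuity mechanism used in (i)'' is not available there: the descent in (i) rested entirely on the inverse $\kappa$ of $\theta^*\vert_{\mathcal{B}_*}$, and in (ii) no surjectivity of $\theta^*\vert_{\mathcal{B}_*}$ and no density of $\theta(\mathrm{ball}\,\mathcal{A})$ in $\mathrm{ball}\,\mathcal{B}$ are assumed, so there is no $\kappa$, no guarantee that a bounded net $(a_\alpha)$ with $\theta(a_\alpha)\to b$ exists for a given $b$, and no argument that $\theta(a_\alpha)\to 0$ forces $\Theta\circ\rho_{\mathcal{A}}(a_\alpha)\to 0$. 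Moreover, the splitting $\rho_{\mathcal{A}}$ furnished by Connes-biprojectivity is only a bounded bimodule homomorphism, not a weak*-continuous one, so even the weak*-continuity of $\Theta\circ\rho_{\mathcal{A}}$ is unjustified; to descend it along $\theta=(\theta_*)^*$ one would need its pre-adjoint to factor boundedly through $\theta_*:\mathcal{B}_*\to\mathcal{A}_*$, which is an additional condition requiring proof. You should know that the paper's own proof of (ii) asserts exactly the same two unsupported facts (that $\rho^{\mathcal{A}}$ is weak* continuous and that $\zeta$ ``extends'' along a weak*-dense-range $\theta$), so the gap is inherited from the source rather than introduced by you; but as a freestanding argument your (ii) does not close, and you should either add the missing hypotheses or supply the factorization.
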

\begin{proof}
	Note that since $\theta :  \mathcal{A} \rightarrow \mathcal{B} $ is a continuous homomorphism, by \cite [Proposition 1.10.10]{Pal}, the map $ \theta \otimes \theta : \mathcal{A} \otimes \mathcal{A} \rightarrow \mathcal{B} \otimes \mathcal{B} $ defined by
	$  (\theta \otimes \theta) (a \otimes b) = \theta (a) \otimes \theta (b) $, can be extended
	to a bounded linear map  $ \theta \hat{\otimes} \theta :\mathcal{A} \hat{ \otimes} \mathcal{A} \rightarrow \mathcal{B} \hat{ \otimes} \mathcal{B}. $  It is readily seen that,
	for every $ a, b, c \in \mathcal{A} $, $ (\theta \hat{\otimes} \theta)([a\otimes b] \cdot c)=(\theta \hat{\otimes} \theta)(a \otimes b)\bullet \theta(c)   $,
	similarly $ (\theta \hat{\otimes} \theta)(c \cdot [a\otimes b]  )=\theta(c) \bullet  (\theta \hat{\otimes} \theta)(a \otimes b)   $ where ``$ \cdot $" is the action of $ \mathcal{A} $ on $ \mathcal{A} \hat{ \otimes} \mathcal{A} $ and ``$ \bullet $" is the action of $ \mathcal{B} $ on $ \mathcal{B} \hat{ \otimes} \mathcal{B} $ inherited by $ \theta $. 
	Also, consider  the canonical  map $ \imath : \mathcal{B} \hat{ \otimes} \mathcal{B} \rightarrow  \sigma WC((\mathcal{B}\hat{\otimes} \mathcal{B})^*)^*,$ which is norm continuous with weak* dense range.

	(i) Suppose that $ \mathcal{A}  $ is biprojective. Then there is an $\mathcal{A}  $-bimodule homomorphism $\rho^{\mathcal{A}} :\mathcal{A} \rightarrow \mathcal{A} \hat{ \otimes} \mathcal{A}  $ such that $ \pi^{\mathcal{A}} \circ \rho^{\mathcal{A}} = id_{\mathcal{A}}$.    Define a map $\zeta :\mathcal{A} \rightarrow \sigma WC((\mathcal{B}\hat{\otimes} \mathcal{B})^*)^*  $ given  by $ \zeta (a) = (\imath \circ (\theta \hat{\otimes} \theta) \circ \rho^{\mathcal{A}})(a)  $. Then $  \zeta$ is  bounded, and for every $ a, a' \in \mathcal{A} $  we have
	\begin{eqnarray}\label{1111}
	\zeta (aa')= \theta (a) \bullet \zeta (a')= \zeta(a) \bullet \theta (a').
	\end{eqnarray}
	
	For every $ b \in  \mathcal{B}$, by assumption, there exists a bounded net $ (a_\alpha) \subseteq  \mathcal{A}$ such that $ \theta  (a_\alpha)\rightarrow b $ in the weak* topology.
	Since $\zeta $ is bounded, $(\zeta (a_{\alpha}))_{\alpha}  $ has a weak* accumulation point   by  Banach-Alaoglu theorem. Passing to a subnet (if it is necessary),  $ w^*-\lim _{\alpha} \zeta (a_{\alpha})$ exists. Thus,
	we can extend $ \zeta $ to a weak* continuous map $ \rho ^\mathcal{B} : \mathcal{B} \rightarrow \sigma WC((\mathcal{B}\hat{\otimes} \mathcal{B})^*)^*$ defined by $  \rho ^\mathcal{B}(b)= w^*-\lim _{\alpha} \zeta (a_{\alpha})$.
	We need to verify that $ \rho ^\mathcal{B} $ is well-defined. For this, it is enough to show that $ w^*-\lim _{\alpha} \zeta (a_{\alpha})=0  $ in $ \sigma WC((\mathcal{B}\hat{\otimes} \mathcal{B})^*)^* $, whenever $w^*-\lim _{\alpha} \theta  (a_\alpha) =0  $ in $ \mathcal{B} $.
	If  $ \lambda \in \mathcal{A}^* $, then by assumption,  there is a $ \varphi \in  \mathcal{B}_*$ such that $\theta^*\vert_{\mathcal{B}_*}(\varphi)=\lambda  $. Now we have
	$$ \lim _\alpha \langle \lambda ,  a_\alpha \rangle = \lim _\alpha \langle \theta^*(\varphi) ,  a_\alpha \rangle = \lim _\alpha \langle \varphi , \theta (a_\alpha)\rangle =0.$$ Hence $a_\alpha \rightarrow 0  $  in the weak topology of $ \mathcal{A} $. Since $ \zeta $
	is weak-weak* continuous, we conclude that $\zeta (a_{\alpha}) \rightarrow 0   $ in the weak* topology.

	Suppose that $ b$ and $ b' \in \mathcal{B} $. Then there exist two nets $ (a_ \alpha )$ and $ (a'_\beta) $ in $ \mathcal{A} $ such that
	$ \theta ( a_ \alpha ) \rightarrow b$ and $ \theta (a'_\beta)\rightarrow b' $ in the weak* topology. By the equation (\ref{1111}), and by the weak* continuity of the action of $\mathcal{B}  $ we have,
	$\rho ^\mathcal{B}(bb')=w^*-\lim _{\alpha} w^*-\lim _{\beta} \zeta(  a_\alpha  a'_\beta )
	=w^*-\lim _{\alpha} w^*-\lim _{\beta}\theta ( a_\alpha ) \bullet \zeta(a'_\beta) =b\bullet \rho ^\mathcal{B}(b'). $
	Similarly, $ \rho ^\mathcal{B}(bb')=\rho ^\mathcal{B}(b)\bullet b'  $. Thus $ \rho ^\mathcal{B} $ is a $ \mathcal{B} $-bimodule homomorphism.
	
	Finally, we prove that $ \pi^{\mathcal{B}}_{\sigma WC}\circ \rho ^\mathcal{B}(b)=b $ for every $ b \in  \mathcal{B}$. Observe that for the elementary tensor element $ a\otimes a' \in \mathcal{A} \hat{\otimes} \mathcal{A} $ we have
	$$
	\pi^{\mathcal{B}}_{\sigma WC} \circ \imath \circ (\theta \hat{\otimes}  \theta ) (a\otimes a' ) = \pi^{\mathcal{B}}_{\sigma WC} \circ \imath (\theta (a) \otimes \theta (a'))
	=\theta (aa')
	=\theta \circ \pi ^\mathcal{A} (a\otimes a' ).
	$$
	Thus for every $ a \in \mathcal{A} $,
	\begin{eqnarray}\label{2.3}
	\pi^{\mathcal{B}}_{\sigma WC} \circ \zeta(a)=\theta (a).
	\end{eqnarray}
	Now let  $ b \in  \mathcal{B}$ and take a net $ (a_ \alpha ) \subseteq  \mathcal{A}$ such that $ \theta (a_ \alpha )  \rightarrow b $ in the weak*-topology. Then, by the   equation (\ref{2.3}), we have $ b=w^*-\lim _{\alpha} \theta (a_ \alpha ) =w^*-\lim _{\alpha}\pi^{\mathcal{B}}_{\sigma WC} \circ  \zeta (a_ \alpha) = \pi^{\mathcal{B}}_{\sigma WC}(w^*-\lim _{\alpha}  \zeta (a_ \alpha)) = \pi^{\mathcal{B}}_{\sigma WC} \circ \rho ^\mathcal{B}(b).$

	(ii) Suppose that $  \mathcal{A} $  is dual and  Connes-biprojective. Then there is  an $\mathcal{A}  $-bimodule homomorphism $\rho^{\mathcal{A}} :\mathcal{A} \rightarrow \sigma WC ((\mathcal{A} \hat{ \otimes} \mathcal{A})^* )^* $ such that $ \pi^{\mathcal{A}}_{\sigma WC} \circ \rho^{\mathcal{A}} = id_{\mathcal{A}}$.
	Consider the map  $ (\theta \hat{\otimes} \theta)^*:(\mathcal{B} \hat{ \otimes} \mathcal{B})^*\rightarrow (\mathcal{A} \hat{ \otimes} \mathcal{A})^* $
	which is  an $\mathcal{A}$-bimodule map. We conclude that  $ (\theta \hat{\otimes} \theta)^*$ maps
	$\sigma WC (\mathcal{B} \hat{ \otimes} \mathcal{B})^* $ into $\sigma WC (\mathcal{A} \hat{ \otimes} \mathcal{A})^*   $.
	 Consequently,  we obtain a weak* continuous map
	 $$ \varphi :=((\theta \hat{\otimes} \theta)^*|_{\sigma WC (\mathcal{B} \hat{ \otimes} \mathcal{B})^*})^* :\sigma WC ((\mathcal{A} \hat{ \otimes} \mathcal{A})^*)^* \rightarrow \sigma WC( (\mathcal{B} \hat{ \otimes} \mathcal{B})^*)^*. $$ Now, we define a map
	$ \zeta :\mathcal{A} \rightarrow \sigma WC ((\mathcal{B} \hat{ \otimes} \mathcal{B})^*)^* $ given by $ \zeta (a) = \varphi \circ \rho ^{\mathcal{A}}(a) $, for every $ a \in \mathcal{A} $. Since $ \varphi  $ and $  \rho ^{\mathcal{A}}$ are weak* continuous, so is $ \zeta $ and since $\theta $ is weak* continuous and  weak* dense range, $ \zeta $ extends to  a weak* continuous map $ \rho ^\mathcal{B} : \mathcal{B} \rightarrow  \sigma WC ((\mathcal{B} \hat{ \otimes} \mathcal{B})^*)^*$. An  argument similar to (i) shows that $ \rho ^\mathcal{B} $ is a $\mathcal{B}  $-bimodule homomorphism.
	
	Let  $ a\otimes a'\in \mathcal{A} \hat{ \otimes} \mathcal{A} $. Since the map $ \varphi $ is the double transpose of $ \theta \hat{\otimes} \theta $, we have
	$$ \varphi (a\otimes a') = (\theta \hat{\otimes} \theta) (a\otimes a')= (\theta (a) \otimes \theta (a')).  $$ Hence
	$$\pi_{\sigma WC}^{\mathcal{B}}\circ \varphi  (a\otimes a') =\pi_{\sigma WC}^{\mathcal{B}}  (\theta (a) \otimes \theta (a')) = \theta (aa') = \theta  \circ \pi_{\sigma WC}^{\mathcal{A}} (a\otimes a').$$
	Thus by linearity and continuity and by the hypothesis, for every $ a \in \mathcal{A} $,  we have
	\begin{eqnarray}\label{2}
	 \pi_{\sigma WC}^{\mathcal{B}}\circ \varphi \circ \rho^{\mathcal{A}}(a)= \theta \circ    \pi^{\mathcal{A}}_{\sigma WC} \circ \rho^{\mathcal{A}} (a) = \theta (a),
	\end{eqnarray}
	 so that the following diagram is commutative;
	\begin{center}
		\begin{tikzpicture}
		\matrix [matrix of math nodes,row sep=2cm,column sep=2cm,minimum width=2cm]
		{
			|(A)| \displaystyle \mathcal{A} &   |(B)| \sigma WC( (\mathcal{A} \hat{ \otimes} \mathcal{A})^*)^*       \\
			|(C)|   \mathcal{B}                     &   |(D)| \sigma WC( (\mathcal{B} \hat{ \otimes} \mathcal{B})^*)^*.  \\
		};
		\draw[->]  (A)-- node [above] { $  \rho^{\mathcal{A}} $}(B);
		\draw[->]   (A)--  node [left] { $ \theta $} (C);
		\draw[->]  (D)-- node [below]   { $ \pi_{\sigma WC}^{\mathcal{B}} $}(C);
		\draw[->]  (B)-- node [right]  {$ \varphi $}(D);
		\end{tikzpicture}
	\end{center}
	
	Since the range of $ \theta $ is dense, by (\ref{2}) we have
	$\pi_{\sigma WC}^{\mathcal{B}}\circ \rho ^\mathcal{B}(b)=b  $  for every $ b \in \mathcal{B} $.
	Hence $ \mathcal{B} $ is Connes-biprojective.
\end{proof}

Given a Banach algebra $\mathcal{A}$, we may define  bilinear maps $ \mathcal{A}^{**} \times \mathcal{A}^{*} \rightarrow \mathcal{A}^{*}$ and $ \mathcal{A}^{*} \times \mathcal{A}^{**} \rightarrow \mathcal{A}^{*}$  given by $(\Phi , \mu )\mapsto \Phi \cdot \mu $ and  $(\mu , \Phi)\mapsto \mu \cdot \Phi $, respectively, where
for every $\Phi \in \mathcal{A}^{**},\,\,\mu \in \mathcal{A}^{*}$ and  $a \in \mathcal{A}$
$$ \langle \Phi \cdot \mu, a \rangle= \langle \Phi,\mu \cdot a  \rangle, \qquad \langle   \mu \cdot \Phi , a \rangle =\langle  \Phi , a \cdot \mu \rangle .$$
We  define two bilinear maps $  \square , \lozenge : \mathcal{A}^{**} \times \mathcal{A}^{**} \rightarrow \mathcal{A}^{**}$ given by $ (\Phi ,  \Psi) \mapsto \Phi \square  \Psi $ and $ (\Phi ,  \Psi) \mapsto \Phi \lozenge \Psi $, where for every $\Phi, \Psi\in\mathcal{A}^{**}$ and $\mu\in \mathcal{A}^{*}$ 
$$ \langle \Phi \square  \Psi ,\mu \rangle= \langle \Phi,\Psi  \cdot \mu \rangle, \qquad \langle  \Phi \lozenge \Psi , \mu    \rangle =\langle  \Psi , \mu \cdot \Phi  \rangle, $$
One can check that $\square  $ and $  \lozenge$ are actually algebra products, called the first and the
second Arens products, respectively.
If for every $ \Phi , \Psi \in \mathcal{A}^{**} $  we have $\Phi  \square \Psi=\Phi  \lozenge \Psi$, then we say that $  \mathcal{A}$ is Arens
regular.

It is well-known that if $ \mathcal{A} $ is an Arens regular Banach algebra, then $ \mathcal{A}^{**} $, the bidual  of $ \mathcal{A} $, is a dual Banach algebra with predual $ \mathcal{A}^* $(see \cite{R5} for more details).
\begin{cor}
	Let $ \mathcal{A} $ be an Arens regular Banach algebra. If $ \mathcal{A} $ is biprojective, then $ \mathcal{A}^{**} $ is
	Connes-biprojective.
\end{cor}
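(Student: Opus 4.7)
The plan is to apply Theorem \ref{hard}(i) to the canonical embedding $\theta := \iota : \mathcal{A} \hookrightarrow \mathcal{A}^{**}$, which plays the role of the homomorphism into a dual Banach algebra. Since $\mathcal{A}$ is Arens regular, $\mathcal{A}^{**}$ equipped with the (unique) Arens product is a dual Banach algebra with predual $\mathcal{A}^*$, and $\iota$ is a continuous (in fact isometric) algebra homomorphism. So the framework of Theorem \ref{hard}(i) applies with $\mathcal{B} = \mathcal{A}^{**}$.

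I would then verify the two hypotheses of part (i). First, I need the weak* density of the image of the closed unit ball: this is immediate from Goldstine's theorem, which says that $\iota(\text{ball}(\mathcal{A}))$ is $\sigma(\mathcal{A}^{**},\mathcal{A}^*)$-dense in $\text{ball}(\mathcal{A}^{**})$. Second, I need that $\iota^*|_{(\mathcal{A}^{**})_*} : \mathcal{A}^* \to \mathcal{A}^*$ is surjective. A direct computation shows that for any $\mu \in \mathcal{A}^* = (\mathcal{A}^{**})_*$ and any $a \in \mathcal{A}$,
\[
\langle \iota^*(\mu), a\rangle = \langle \mu, \iota(a)\rangle = \iota(a)(\mu) = \mu(a),
\]
so $\iota^*|_{\mathcal{A}^*}$ is the identity on $\mathcal{A}^*$, which is certainly surjective.

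With both hypotheses in hand, and using the assumption that $\mathcal{A}$ itself is biprojective, Theorem \ref{hard}(i) delivers the conclusion that $\mathcal{B} = \mathcal{A}^{**}$ is Connes-biprojective.

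There is essentially no serious obstacle here: the real content has been absorbed into the construction performed in the proof of Theorem \ref{hard}(i), where a biprojectivity splitting $\rho^\mathcal{A}$ was transported along $\theta \hat{\otimes} \theta$ and the canonical map $\imath : \mathcal{B}\hat{\otimes}\mathcal{B} \to \sigma WC((\mathcal{B}\hat{\otimes}\mathcal{B})^*)^*$, then extended by weak* limits. The only point to be careful about when writing this out is to make clear that the predual of $\mathcal{A}^{**}$ (in the sense of the dual Banach algebra structure) is indeed $\mathcal{A}^*$, so that the surjectivity hypothesis on $\iota^*|_{(\mathcal{A}^{**})_*}$ is really just the triviality $\iota^*|_{\mathcal{A}^*} = \mathrm{id}_{\mathcal{A}^*}$; this, together with Goldstine, is what makes the canonical embedding fit the hypotheses of Theorem \ref{hard}(i) perfectly.
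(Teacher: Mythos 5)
Your proposal is correct and follows exactly the same route as the paper, which simply observes that the canonical embedding $\iota:\mathcal{A}\hookrightarrow\mathcal{A}^{**}$ satisfies the hypotheses of Theorem \ref{hard}(i). You have merely spelled out the two verifications (Goldstine's theorem for weak* density of the unit ball's image, and $\iota^*|_{\mathcal{A}^*}=\mathrm{id}_{\mathcal{A}^*}$ for surjectivity) that the paper leaves implicit.
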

\begin{proof}
	It is clear that the inclusion map  $id: \mathcal{A} \hookrightarrow \mathcal{A}^{**}$ satisfies the conditions of Theorem \ref{hard}(i).
\end{proof}
If $ \mathcal{A} $ is a dual Banach algebra and $ I $ is a weak*-closed ideal of $ \mathcal{A} $, then $ I $ is a dual Banach algebra with predual $ I_*=\mathcal{A}_*/I^\perp $. To see this, we have
$$ (I_*)^*=( \mathcal{A}_*/I^\perp)^*=(I^\perp)^\perp = I.$$
Since the multiplication in $ \mathcal{A} $ is separately weak* continuous, a simple verification shows that the multiplication on $ \mathcal{A}/I $ is separately weak* continuous, so $ \mathcal{A}/I  $ is also a dual Banach algebra.
\begin{pro}
	Let $ \mathcal{A} $ be a Connes-biprojective dual Banach algebra and  let $ I $ be a weak*-closed ideal of $ \mathcal{A} $ which is essential as a left Banach $ \mathcal{A} $-module. Then $  \mathcal{A}/I  $ is Connes-biprojective.
\end{pro}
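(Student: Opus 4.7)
The plan is to emulate the argument of Theorem \ref{hard}(ii), specialised to the surjective quotient homomorphism $q:\mathcal{A}\to\mathcal{A}/I$. As noted in the paragraph preceding the proposition, $\mathcal{A}/I$ is itself a dual Banach algebra and $q$ is a weak*-continuous algebra homomorphism. First I would form $q\hat{\otimes}q:\mathcal{A}\hat{\otimes}\mathcal{A}\to(\mathcal{A}/I)\hat{\otimes}(\mathcal{A}/I)$ and, exactly as in the proof of Theorem \ref{hard}(ii), define
\[
\varphi:=\bigl((q\hat{\otimes}q)^{*}\big|_{\sigma WC(((\mathcal{A}/I)\hat{\otimes}(\mathcal{A}/I))^{*})}\bigr)^{*}:\sigma WC((\mathcal{A}\hat{\otimes}\mathcal{A})^{*})^{*}\to\sigma WC(((\mathcal{A}/I)\hat{\otimes}(\mathcal{A}/I))^{*})^{*},
\]
which is a weak*-continuous $\mathcal{A}$-bimodule map, with $\mathcal{A}$ acting on the codomain through $q$. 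Writing $\rho^{\mathcal{A}}$ for the Connes-biprojective splitting of $\mathcal{A}$, set $\zeta:=\varphi\circ\rho^{\mathcal{A}}$; then the same calculation as for (\ref{1111}) yields
\[
\zeta(ab)=q(a)\bullet\zeta(b)=\zeta(a)\bullet q(b)\qquad(a,b\in\mathcal{A}).
\]

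The crucial step, and the only place where the essentialness hypothesis intervenes, is to verify $\zeta|_{I}=0$, so that $\zeta$ descends to $\mathcal{A}/I$. For $a\in\mathcal{A}$ and $i'\in I$, the bimodule property of $\rho^{\mathcal{A}}$ and of $\varphi$, together with $q(i')=0$, give
\[
\zeta(ai')=\varphi(\rho^{\mathcal{A}}(a)\cdot i')=\varphi(\rho^{\mathcal{A}}(a))\bullet q(i')=0.
\]
Essentialness of $I$ as a left Banach $\mathcal{A}$-module means $I=\overline{\mathcal{A}\cdot I}$, so every $i\in I$ is a norm limit of finite sums $\sum_{n}a_{n}i'_{n}$ with $a_{n}\in\mathcal{A}$ and $i'_{n}\in I$; norm continuity of $\zeta$ then forces $\zeta(i)=0$. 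Consequently $\zeta$ factors through $q$ as a bounded linear map $\rho^{\mathcal{A}/I}:\mathcal{A}/I\to\sigma WC(((\mathcal{A}/I)\hat{\otimes}(\mathcal{A}/I))^{*})^{*}$ with $\rho^{\mathcal{A}/I}\circ q=\zeta$.

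It remains to check that $\rho^{\mathcal{A}/I}$ is an $\mathcal{A}/I$-bimodule homomorphism and a section of $\pi_{\sigma WC}^{\mathcal{A}/I}$. The bimodule property follows at once by pushing $\zeta(ab)=q(a)\bullet\zeta(b)=\zeta(a)\bullet q(b)$ through $q$ on the left. For the section property I would establish the naturality identity $\pi_{\sigma WC}^{\mathcal{A}/I}\circ\varphi=q\circ\pi_{\sigma WC}^{\mathcal{A}}$ by checking it on elementary tensors in $\mathcal{A}\hat{\otimes}\mathcal{A}$ and extending by linearity and weak* continuity, in the spirit of (\ref{2}); this gives $\pi_{\sigma WC}^{\mathcal{A}/I}(\rho^{\mathcal{A}/I}(q(a)))=q(\pi_{\sigma WC}^{\mathcal{A}}(\rho^{\mathcal{A}}(a)))=q(a)$ for every $a\in\mathcal{A}$, and $q$ is surjective. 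I expect the vanishing $\zeta|_{I}=0$ to be the only substantive obstacle: without the factorisation $I=\overline{\mathcal{A}\cdot I}$ supplied by essentialness there is no reason for $\varphi\circ\rho^{\mathcal{A}}$ to annihilate $I$, and the whole quotient construction would collapse.
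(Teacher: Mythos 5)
Your proof is correct and follows the same overall strategy as the paper --- push the splitting $\rho^{\mathcal{A}}$ of $\pi_{\sigma WC}^{\mathcal{A}}$ forward along the quotient map $q$, use essentiality of $I$ to see that the resulting map kills $I$, and then descend to $\mathcal{A}/I$ --- but your implementation is genuinely streamlined in one respect. The paper factors the passage to the quotient in two stages: first $id\hat{\otimes}q:\mathcal{A}\hat{\otimes}\mathcal{A}\to\mathcal{A}\hat{\otimes}(\mathcal{A}/I)$, where showing that the image of $\rho$ is annihilated by the right action of $I$ requires a Goldstine approximation together with normality of $\sigma WC((\mathcal{A}\hat{\otimes}(\mathcal{A}/I))^*)^*$, and only afterwards $q\hat{\otimes}id$ to reach $(\mathcal{A}/I)\hat{\otimes}(\mathcal{A}/I)$. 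You instead apply $q\hat{\otimes}q$ in a single step, so that the right $\mathcal{A}$-action on the target factors through $q$ and the vanishing $\zeta(ai')=\varphi(\rho^{\mathcal{A}}(a))\bullet q(i')=0$ is immediate from the bimodule property, with no weak* approximation needed; since $(q\hat{\otimes}id)\circ(id\hat{\otimes}q)=q\hat{\otimes}q$, the map you construct coincides with the paper's $\psi\circ\hat{\varphi}$. The remaining ingredients --- that $(q\hat{\otimes}q)^*$ restricts to a map between the relevant $\sigma WC$ spaces because $q$ is weak* continuous, the factorization of the bounded map $\zeta$ through the closed subspace $I$ using $I=\overline{\operatorname{span}}\,\mathcal{A}\cdot I$, and the naturality identity $\pi_{\sigma WC}^{\mathcal{A}/I}\circ\varphi=q\circ\pi_{\sigma WC}^{\mathcal{A}}$ checked on the weak* dense image of the elementary tensors --- are all handled correctly and match the paper's use of the same machinery in Theorem \ref{hard}. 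In short, both arguments are valid; yours trades the paper's normality/Goldstine step for a purely algebraic observation about the induced action, which makes the role of the essentiality hypothesis stand out more cleanly.
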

\begin{proof}
	Since $ \mathcal{A} $ is  Connes-biprojective, the map\\
	 $ \pi_{\sigma WC} : \sigma WC((\mathcal{A}\hat{\otimes} \mathcal{A})^*)^* \rightarrow \mathcal{A}$ is a retraction, so there exists
	a bounded $ \mathcal{A} $-bimodule homomorphism $ \rho :\mathcal{A}\rightarrow \sigma WC((\mathcal{A}\hat{\otimes} \mathcal{A})^*)^* $ as a right inverse of $ \pi_{\sigma WC} $. Let $ q:\mathcal{A}  \rightarrow  \mathcal{A}/I  $ be the quotient map. Then the map $ id \hat{\otimes} q : \mathcal{A}\hat{\otimes} \mathcal{A} \rightarrow \mathcal{A}\hat{\otimes}  (\mathcal{A}/I ) $ is a bounded $ \mathcal{A} $-bimodule homomorphism, so is $ (id \hat{\otimes} q )^*: ( \mathcal{A}\hat{\otimes}  (\mathcal{A}/I  ) )^*\rightarrow  ( \mathcal{A}\hat{\otimes} \mathcal{A}  )^* $. Thus $ (id \hat{\otimes} q )^* $ maps $ \sigma WC((\mathcal{A}\hat{\otimes} (\mathcal{A}/I  ))^*) $ into $ \sigma WC((\mathcal{A}\hat{\otimes} \mathcal{A})^*) $,
	 therefore we obtain an $ \mathcal{A} $-bimodule homomorphism
	$$ ( (id \hat{\otimes} q )^*\vert_{\sigma WC(\mathcal{A}\hat{\otimes} \mathcal{A})^*})^*: \sigma WC((\mathcal{A}\hat{\otimes} \mathcal{A})^*)^* \rightarrow  \sigma WC((\mathcal{A}\hat{\otimes} (\mathcal{A}/I  ))^*)^* .$$
	Composing this map with $ \rho $, we obtain the map $$ \varphi :=  ( (id \hat{\otimes} q )^*\vert_{\sigma WC(\mathcal{A}\hat{\otimes} \mathcal{A})^*})^* \circ \rho :\mathcal{A} \rightarrow \sigma WC((\mathcal{A}\hat{\otimes} (\mathcal{A}/I  ))^*)^* ,$$ which is a bounded $ \mathcal{A} $-bimodule homomorphism since it is a composition of  such two maps.
	If $ a\in \mathcal{A} $, then $$ \varphi (a) \in \sigma WC((\mathcal{A}\hat{\otimes}(\mathcal{A}/I  ))^*)^*=
	(\mathcal{A}\hat{\otimes} (\mathcal{A}/I  ))^{**} /\sigma WC((\mathcal{A}\hat{\otimes}(\mathcal{A}/I  ))^*)^\perp. $$  Suppose that $ \tilde{\varphi} (a)$ is a corresponding element of $ \varphi (a) $ in $ (\mathcal{A}\hat{\otimes} (\mathcal{A}/I  ))^{**}  $ (that is, $ \tilde{\varphi} (a)$ is the image of $ \varphi (a) $ under the quotient map). By the Goldstine's theorem there is a net $ (\varphi _\alpha(a))_\alpha \subseteq  \mathcal{A}\hat{\otimes} (\mathcal{A}/I  )$ such that $\varphi _\alpha(a) \rightarrow  \tilde{\varphi} (a) $ in the weak* topology. Since $\sigma WC((\mathcal{A}\hat{\otimes} (\mathcal{A}/I  ))^*)^*$ is normal, we have $ \varphi _\alpha(a)\cdot i \rightarrow \tilde{\varphi} (a) \cdot i $ for every $ i\in I $, and since for every $ \alpha $  we have $\varphi _\alpha(a)\cdot i=0 $, hence $ \tilde{\varphi} (a) \cdot i=0.  $ Thus $ \varphi (ai)=\varphi (a)\cdot i=0 $. By the fact that $ I $ is a left essential ideal, we have $ \varphi \vert _I=0 $, so it induces an $ \mathcal{A} $-bimodule homomorphism
	$ \hat{\varphi}:(\mathcal{A}/I  )\rightarrow \sigma WC((\mathcal{A}\hat{\otimes} (\mathcal{A}/I  ))^*)^*. $

In contrast, we have a  bounded $ \mathcal{A} $-bimodule homomorphism $$  q  \hat{\otimes} id :\mathcal{A}\hat{\otimes} (\mathcal{A}/I  )\rightarrow (\mathcal{A}/I  )\hat{\otimes} (\mathcal{A}/I),$$ which gives an $ \mathcal{A} $-bimodule homomorphism $$  (q \hat{\otimes} id)^* :((\mathcal{A}/I  )\hat{\otimes} (\mathcal{A}/I  ))^* \rightarrow (\mathcal{A}\hat{\otimes} (\mathcal{A}/I  ))^*, $$ such that it maps $ \sigma WC( ((\mathcal{A}/I  )\hat{\otimes} (\mathcal{A}/I  ))^* ) $ into $ \sigma WC (  (\mathcal{A}\hat{\otimes} (\mathcal{A}/I  ))^*) $.
Now consider the adjoint map $(q \hat{\otimes} id)^*\vert_{\sigma WC (\frac{\mathcal{A}}{I}\hat{\otimes} \frac{\mathcal{A}}{I})^* })^*$ which is denoted by
	$$\psi :\sigma WC(( (\mathcal{A}\hat{\otimes} (\mathcal{A}/I  ))^*)^* \rightarrow \sigma WC( ((\mathcal{A}/I  )\hat{\otimes} (\mathcal{A}/I  ))^*)^*.$$
Take
	 $ \zeta :=\psi \circ \hat{\varphi}: \mathcal{A}/I  \rightarrow \sigma WC( ((\mathcal{A}/I  )\hat{\otimes} (\mathcal{A}/I  ))^*)^* $. It is easy to see that $ \pi_{\sigma WC(\mathcal{A}/I  )}\circ \zeta =id_{\mathcal{A}/I  } $, therefore $ \mathcal{A}/I  $ is Connes-biprojective.
\end{proof}

\bigskip


\end{document}